\theoremstyle{plain}
\newtheorem{teo}{Theorem}[section]
\newtheorem{lemma}[teo]{Lemma}
\newtheorem{ackn}{Acknowledgments\!}
\theoremstyle{definition}
\theoremstyle{remark}
\newtheorem{rem}[teo]{Remark}
\numberwithin{equation}{section}
\def\SS{{{\mathbb S}}}
\def\RR{{\mathbb R}}
\def\RRR{{\mathrm R}}
\def\HHH{{\mathrm H}}
\def\TTT{{\mathrm{T}}}
\def\Ric{{\mathrm {Ric}}}
\def\Rm{{\mathrm {Rm}}}
\def\trace{{\mathrm{tr}}}
\title[A Note on Codazzi Tensors]{A Note on Codazzi Tensors}
\author[Giovanni Catino]{Giovanni Catino}
\address[Giovanni Catino]{Dipartimento di Matematica, Politecnico di Milano, Piazza Leonardo da Vinci 32, Milano, Italy, 20133}
\email[G. Catino]{giovanni.catino@polimi.it}
\author[Carlo Mantegazza]{Carlo Mantegazza}
\address[Carlo Mantegazza]{Scuola Normale Superiore di Pisa, Piazza dei Cavalieri 7, Pisa, Italy, 56126}
\email[C. Mantegazza]{c.mantegazza@sns.it}
\author[Lorenzo Mazzieri]{Lorenzo Mazzieri}
\address[Lorenzo Mazzieri]{Scuola Normale Superiore di Pisa, Piazza dei Cavalieri 7, Pisa, Italy, 56126}
\email[L. Mazzieri]{l.mazzieri@sns.it}
\date{\today}
\begin{document}

\begin{abstract} We discuss a gap in Besse's book~\cite{besse},
  recently pointed out by Merton in~\cite{merton}, which concerns the
  classification of Riemannian manifolds admitting a Codazzi tensors
  with exactly two distinct eigenvalues. For such manifolds, we prove
  a structure theorem, without adding extra hypotheses and then we
  conclude with some application of this theory to the classification
  of three--dimensional gradient Ricci solitons.
\end{abstract}

\maketitle
\tableofcontents

\section{Introduction}

For $n\geq 3$, let $(M^{n},g)$ be a smooth Riemannian manifold and
consider a Codazzi tensor $\TTT$ on $M^{n}$, i.~e., a symmetric
bilinear form satisfying the Codazzi equation 
$$
(\nabla_{X}\TTT)(Y,Z)=(\nabla_{Y}\TTT)(X,Z)\, ,
$$ 
for every tangent vectors $X,Y,Z$.

In the book {\em Einstein Manifolds}~\cite{besse}, by Besse, it is proved that if a Riemannian manifold $(M^n,g)$ admits a Codazzi tensor $\TTT$ such that at every point of $M^n$, $\TTT$ has
exactly two distinct eigenvalues, then
\begin{itemize}
\item if the constant multiplicities of the two eigenspaces are larger than one, $(M^n,g)$ is locally a Riemannian product,
\item if the above multiplicities are respectively 1 and $n-1$ {\em and the trace of $\TTT$ is constant}, then 
$(M^n,g)$ is locally a warped product of an $(n-1)$--dimensional Riemannian manifolds on an interval of $\RR$.
\end{itemize}
For more details, we refer the reader to discussion~16.12 in~\cite{besse}.

Before showing this result, Besse states "{\em ... a similar argument
  works without this hypothesis [that trace of $\TTT$ is 
constant]}". Recently in~\cite{merton}, G.~Merton provided a
counterexample to the local warping structure, showing that the last
Besse's statement is false. In~\cite{merton}, he also discusses some
possible extra hypotheses, weaker than {\em trace of $\TTT$ constant},
under which the local warped structure can be obtained.

Our goal here is to describe, without adding extra hypotheses to
Besse's statement, what is the local geometric structure of a
Riemannian manifold admitting a Codazzi tensor with exactly two
distinct eigenvalues. Essentially, one has that the manifold may
present zones where it is a warped product on a interval and zones
where it is not. In this latter case, it turns out that the manifold
admits a local totally geodesics foliation. This is the content of
our Theorem~\ref{mainteo}.

In Section~\ref{example}, we will give an example of a Riemannian
manifold where both the situations (local warped product 
structure and local totally
geodesics foliation) described in our structure theorem are present at
the same time.

Finally, in the last section, we will show how this Codazzi tensors
theory can be applied to the classification of gradient Ricci
solitons.

\medskip

\begin{ackn} 
The authors are members of the Gruppo Nazionale per
l'Analisi Matematica, la Probabilit\`{a} e le loro Applicazioni (GNAMPA) of the Istituto Nazionale di Alta Matematica (INdAM). They are supported by the GNAMPA project ``Equazioni di evoluzione geometriche e strutture di tipo Einstein''.
\end{ackn}

\bigskip

\section{Codazzi Tensors with Two Distinct Eigenvalues}

In this section we present the statement and the proof of our main theorem.

\begin{teo}
\label{mainteo} 
Let $\TTT$ be a Codazzi tensor on $(M^{n},g)$, with $n\geq 3$. Suppose
that at every point of $M^n$, the tensor $\TTT$ has exactly two
distinct eigenvalues $\rho$ and $\sigma$ of multiplicity 1 and $n-1$,
respectively. Finally, we let $W=\{ p\in M^n \, \big| \, d\sigma(p)
\neq 0 \}$. Then, we have that 
\begin{enumerate}
\item The closed set $\overline{W}=W\cup \partial W$ with the metric
  $g|_{\overline{W}}$ is locally isometric to the warped product of
  some $(n-1)$--dimensional Riemannian manifold on an interval of
  $\RR$ and $\sigma$ is constant along the "leaves" of the warped
  product.
\item The boundary of $W$, if present, 
is given by the disjoint union of connected totally geodesic hypersurfaces where $\sigma$ is constant. 
\item Each connected component of the complement of $\overline{W}$ in
  $M$, if present, has $\sigma$ constant and it is foliated by totally
  geodesic hypersurfaces.
\end{enumerate}
The $(n-1)$--dimensional tangent subspaces to the above warping hypersurfaces at point (1) and to the totally geodesic hypersurfaces at points (2) and (3) are the eigenspaces of $\TTT$ with respect to $\sigma$.
\end{teo}
\begin{proof} Since the Codazzi tensor $\TTT$ has
exactly two distinct eigenvalues $\rho$ and $\sigma$ of multiplicity 1 and $n-1$, respectively, we have by Proposition~16.11 in~\cite{besse} that the tangent bundle $TM$ of $M$ splits as the orthogonal direct sum of two
{\em integrable} eigendistributions: a line field 
$V_{\rho}$ and a codimension one distribution $V_{\sigma}$ with totally {\em umbilical} leaves, which means 
that the second fundamental form ${h}$ of each leaf is proportional to
the metric $g^{\sigma}$, induced by $g$ on $V_{\sigma}$.

To fix the notations, we will denote by ${\nabla}$ the Levi--Civita connection of the metric $g$ on $M^n$ and we recall that the (scalar) second fundamental form of a leaf $L$ of the codimension one distribution $V_{\sigma}$ can be defined as
$$
h(X,Y)=-g(\nabla_X Y,\nu) \, ,
$$
where $X$ and $Y$ are vector fields along $L$ and $\nu$ is a choice of a unit normal vector field to $L$. The fact that $L$ is umbilical means that, for every couple of vector fields $X,Y$ tangent to $L$, we have
$$
h(X,Y) \, = \, \frac{\HHH}{n-1} \, g^{\sigma}(X,Y) \, ,
$$
where $\HHH$, the mean curvature of $L$, is defined as the trace of $h$ with respect to $g^{\sigma}$.

Since $n\geq 3$, we have that the codimension one distribution
$V_\sigma$ has dimension strictly bigger than one. Thus, we infer from
Proposition~16.11 in~\cite{besse} that the eigenfunction $\sigma$ must
be constant along the leaves of $V_\sigma$. In particular, whenever
$d\sigma \neq 0$, the leaves of $V_\sigma$ are locally 
regular level sets of $\sigma$.

To proceed, we fix a point $p \in M$ and we consider a local
coordinate system $(x^0,\ldots, x^{n-1})$ adapted to the leaves of
$V_\sigma$ on a neighborhood $U$ of $p$. This means that
$\partial/\partial x^0 \in V_\rho$ and $\partial/\partial x^j \in
V_\sigma$, for $j= 1,\ldots, n-1$. In this chart, the unit vector
field $\nu = (\partial / \partial x^0) / \sqrt{g_{00}}$ is normal to
any leaf of the distribution $V_\sigma$ and since the two
eigendistributions are mutually orthogonal we immediately get $g_{0j}
= 0$ and $\TTT_{0j}=0$, for $j=1, \ldots,n-1$. If $L$ is the leaf of
$V_\sigma$ through the point $p$, the second fundamental form of $L$
about $p$ and the umbilicity condition can be written as
\begin{equation}
\label{umbilic}
{h}_{ij} \, = \,-\big\langle {\nabla}_{\frac{\partial\,}{\partial x^{i}}} \tfrac{\partial\,}{\partial x^{j}}, \nu\big\rangle \, = \,  
 \,-\big\langle {\nabla}_{\frac{\partial\,}{\partial x^{i}}} \tfrac{\partial\,}{\partial x^{j}}, \tfrac{\partial\,}{\partial x^{0}} \big\rangle/\sqrt{g_{00}} \, = \,  -{\Gamma}^{0}_{ij}\sqrt{g_{00}}= \frac{\HHH}{n-1} \, g_{ij}^\sigma\,,
\end{equation}
for $i,j = 1,\ldots,n-1$.\\
Denoting by ${\nabla}^{\sigma}$ the Levi--Civita connection of the induced metric $g^{\sigma}$, the Codazzi--Mainardi equations (see Theorem~1.72 in~\cite{besse}) read
\begin{equation}\label{codman}
\big({\nabla}^{\sigma}_{\frac{\partial\,}{\partial x^{i}}}{h}\big) \big(\tfrac{\partial\,}{\partial x^{j}},\tfrac{\partial\,}{\partial x^{k}}\big) -
\big({\nabla}^{\sigma}_{\frac{\partial\,}{\partial x^{j}}}{h}\big) \big(\tfrac{\partial\,}{\partial x^{i}},\tfrac{\partial\,}{\partial x^{k}}\big) \, = \, \big\langle \,{\Rm}
\big(\tfrac{\partial\,}{\partial x^{i}},\tfrac{\partial\,}{\partial x^{j}}\big) \tfrac{\partial\,}{\partial x^{k}}, \nu \, \big\rangle \,.
\end{equation}
Using the umbilicity property~\eqref{umbilic} of $L$ and tracing the
left hand side of equation~\eqref{codman} with the inverse of the metric $(g^\sigma)^{ik}_{\sigma} = g^{ik}$, we get
$$
g^{ik}\bigg[ \, \big({\nabla}^{\sigma}_{\frac{\partial\,}{\partial
    x^{i}}}{h}\big) \big(\tfrac{\partial\,}{\partial
  x^{j}},\tfrac{\partial\,}{\partial x^{k}}\big) -
\big({\nabla}^{\sigma}_{\frac{\partial\,}{\partial x^{j}}}{h}\big)
\big(\tfrac{\partial\,}{\partial x^{i}},\tfrac{\partial\,}{\partial
  x^{k}}\big) \,\bigg] = 
\, \frac{1}{n-1}\,\partial_{j}{\HHH} -
\partial_{j}{\HHH} \,=\, -\frac{n-2}{n-1} \, \partial_{j}{\HHH} \, .
$$
Tracing also the right hand side, we get
\begin{equation}\label{eq1000}
-\frac{n-2}{n-1} \, \partial_{j}{\HHH} \,=\, g^{ik} \, \big\langle \,{\Rm}
\big(\tfrac{\partial\,}{\partial x^{i}},\tfrac{\partial\,}{\partial x^{j}}\big) \tfrac{\partial\,}{\partial x^{k}}, \nu \, \big\rangle \, = \, \Ric \big(\tfrac{\partial}{\partial x^{j}},\nu\big)\, = 
\, \Ric_{0j} /\sqrt{g_{00}}\,,
\end{equation}
as $g^{i0}=0$ when $i\geq 1$ and $\big\langle \,{\Rm}
\big(\tfrac{\partial\,}{\partial x^{i}},\tfrac{\partial\,}{\partial
  x^{j}}\big) \tfrac{\partial\,}{\partial x^{k}}, \nu \, \big\rangle$
is equal to zero if $i=k=0$.\\
Now, it is a general fact (see Corollary~16.17 in~\cite{besse}) that
every Codazzi tensor $\TTT$ commutes with the Ricci tensor, that is,
$g^{kl}\TTT_{ik}\Ric_{lj}=g^{kl}\Ric_{ik}\TTT_{lj}$. In particular, 
$$
\rho \,  \Ric_{0j} \, = \, g^{kl}\TTT_{0k}\Ric_{lj} \, = \,
g^{kl}\Ric_{0k}\TTT_{lj} \, = \, \sigma \,
g^{kl}\Ric_{0k}g_{lj}=\sigma \, \Ric_{0j}\,,
$$
hence, $\Ric_{0j}=0$ for every $j = 1, \dots, n-1$, as $\rho
\not=\sigma$ in $U$. We conclude by equation~\eqref{eq1000} that the
mean curvature ${\HHH}$ is constant along every connected component of
$L$, hence, the same conclusion holds for any leaf of $V_{\sigma}$.

Next, we recall from Proposition 16.11 (ii) in~\cite{besse} that the
eigenvalue $\sigma$ is constant along the leaves of $V_\sigma$, thus,
in our local chart, it only depends on the $x^0$ variable. Moreover,
by the same proposition, one has that
\begin{equation}
\label{meancur}
{\HHH}\, = \,\frac{1 }{\rho -\sigma}\, \frac{\partial \sigma}{\partial x^0} \,.
\end{equation}
From this we deduce that the connected component of the
$V_\sigma$--leaves through critical points of $\sigma$ are minimal and
by the umbilicity they are also totally geodesic. This gives the
description at the point (3) of the (possibly non present) interior of the set
where $d\sigma=0$.

We pass now to consider the open set $W\subset M$ given by the
complement of the critical points of $\sigma$ in $M$. We are going to
prove that $\rho$ is locally constant on the connected component of
the $V_\sigma$--leaves which are sitting in $W$. To see this, it is
sufficient to take the (coordinate) derivative of both sides of relation~\eqref{meancur} 
with respect to $x^j$, for $j=1,\ldots,n-1$. This gives
$$
0=\partial_j{\HHH} \, = \,- \frac{1}{(\rho -\sigma)^2}\,  
\frac{\partial \rho}{\partial x^j}
\frac{\partial \sigma}{\partial x^0} + \frac{1}{\rho -\sigma}\, \frac{\partial^2 \sigma}{\partial x^j \partial x^0} \, = \,- \frac{1}{(\rho -\sigma)^2}\,  
\frac{\partial \rho}{\partial x^j}
\frac{\partial \sigma}{\partial x^0} \,,
$$
where we used the symmetry of the second derivative together with the
constancy of $\sigma$ along the $V_\sigma$--leaves.
Since in our coordinates $d\sigma = \partial_0 \sigma dx^0$ and 
$d\sigma \neq 0$ in $W$, the claim follows. To conclude, we
observe that the boundary of $W$ (if any) can be described as a
suitable union of connected component of level sets of $\sigma$. By
continuity the eigenvalue $\rho$ must be locally constant
also on $\partial W$.

To show that $g$ has a warped product structure on $\overline{W} = W
\cup \partial W$, we first observe that the condition $\partial_j
\rho= 0$, for $j=1, \ldots, n-1$, combined
with~\cite[Proposition 16.11--(ii)]{besse}, implies that $V_{\rho}$ is
a {\em geodesic} line distribution in $\overline{W}$. This means that
$\nabla_\nu \nu = 0$, which easily implies $\Gamma^{j}_{00}=0$, hence
$\partial_j g_{00}=0$, for every $j= 1,\ldots, n-1$. Equation~\eqref{umbilic} then yields
$$
\frac{\partial g_{ij}}{\partial x^0} \,=\, -2 \,{\Gamma}^{0}_{ij} =
\frac{2 ({\HHH}/{\sqrt{g_{00}}})}{n-1}\,  g_{ij}\,.
$$
Since ${\HHH}$ and $g_{00}$ are constant along $V_{\sigma}$, one has that
$$ 
\frac{\partial g_{ij}}{\partial x^0}(x^{0}, \dots, x^{n-1}) \,=\,
\varphi(x^{0}) \,g_{ij}(x^{0}, \dots, x^{n-1}) \,,
$$ 
for some function $\varphi$ depending only on the $x^{0}$
variable. Setting $\psi(x^0) = d \varphi / d x^0$, 
one has that $e^{-\psi} g_{ij}$ does not depend on the $x^0$
variable. Thus, for every $i,j = 1, \ldots, n-1$, we can write
$$
g_{ij}(x^{0}, \dots, x^{n-1})\,=\,e^{\psi(x^{0})} \, G_{ij}(x^{1}, \dots, x^{n-1}) \,,
$$
for some suitable functions $G_{ij}$. This prove that $g$ has a local
warped product structure in $\overline{W}$ and the proof is complete.
\end{proof}

\begin{rem}\label{remanal}
If the metric is analytic and the Riemannian manifold $(M^n,g)$ is connected, the presence of an open set where $\sigma$ is constant implies that everywhere $\sigma$ is constant and $d\sigma=0$, hence $W=\emptyset$. In the opposite case $\overline{W}=M^n$ and the totally geodesic hypersurfaces (where $\sigma$ is constant) whose union gives $\partial W$ are locally finite.\\
Hence, in the analytic case we have a dichotomy: either the whole manifold is locally a warped product or it is globally foliated by totally geodesic hypersurfaces.
\end{rem}

\section{An Example}
\label{example}

We show now that actually the two situations described in
Theorem~\ref{mainteo} can be both present in a Riemannian manifold if the metric is only smooth but not analytic.

We follow the line of Merton~\cite{merton}.

Let $M=\RR\times\SS^1\times\SS^1$ be endowed with the Riemannian metric
$$
g(t,x,y)=\bigl(\sigma(t)-\rho(t,x,y)\bigr)^{-2} dt^2+\sigma dx^2+\sigma dy^2\,,
$$
where $\sigma:\RR\to\RR^+$ and $\rho:M\to\RR$ are smooth functions,
such that:

\begin{itemize}
\item The function $\sigma$ is monotone increasing 
from 1 to 2, with $\sigma^\prime>0$, in the interval $(-\infty,-1)$, 
constant equal to 2 in the interval $[-1,1]$ and again monotone
increasing from 2 to 3, with $\sigma^\prime>0$, in the interval $(1,+\infty)$.

\item The function $\rho$ is equal to $3\sigma$ 
when $t\in(-\infty,-1]$ or $t\in[1,+\infty)$, for every $(x,y)\in\SS^1\times\SS^1$.

\item For $t\in(-1,1)$ and every $(x,y)\in\SS^1\times\SS^1$, the
  function $\rho$ is nonconstant
  on the leaves $\{t\}\times\SS^1\times\SS^1$, in particular it cannot
  be three times the function $\sigma$.
\end{itemize}

We then define the (1,1)--tensor $\TTT$ as follows
\begin{align*}
\TTT(\partial_t)=&\,\rho(t,x,y)\partial_t\\
\TTT(\partial_x)=&\,\sigma(t)\partial_x\\
\TTT(\partial_y)=&\,\sigma(t)\partial_y
\end{align*}
and we will show that $\TTT$ is a Codazzi tensor.\\
The (0,2)--version of $\TTT$ reads
\begin{align*}
\TTT_{tt}=&\,\TTT_t^t \, g_{tt}=\frac{\rho}{(\sigma-\rho)^2}\\
\TTT_{xx}=&\,\TTT_x^x \, g_{xx}=\sigma^2\\
\TTT_{yy}=&\,\TTT_y^y \, g_{yy}=\sigma^2\\
\end{align*}
and all the other components are null.

The Christoffel symbols of the metric $g$ are given by
\begin{align*}
\Gamma^t_{tt}=&\,-\bigl(\sigma-\rho\bigr)^{-1}(\sigma^\prime-\partial_t\rho)\\
\Gamma^i_{tt}=&\,-\sigma^{-1}\bigl(\sigma-\rho\bigr)^{-3}\partial_i\rho\\
\Gamma^t_{it}=&\,\bigl(\sigma-\rho\bigr)^{-1}\partial_i\rho\\
\Gamma^i_{jt}=&\,\sigma^{-1}\sigma^\prime\delta_j^i/2\\
\Gamma^t_{ij}=&\,-\bigl(\sigma-\rho\bigr)^{2}\sigma^\prime\delta_{ij}/2\\
\Gamma^k_{ij}=&\,0\,,
\end{align*}
where the indices $i,j,k$ can only be $x$ and $y$.\\
Thus we compute (we skip the trivial checks)
\begin{align*}
\nabla_y\TTT_{xx}-\nabla_x\TTT_{yx}
=&\,\partial_y\sigma^2-2\TTT_{xp}\Gamma^p_{xy}+\TTT_{yp}\Gamma^p_{xx}
+\TTT_{xp}\Gamma^p_{xy}\\
=&\,-\sigma^2\Gamma^x_{xy}+\sigma^2\Gamma^y_{xx}\\
=&\,0
\end{align*}

\begin{align*}
\nabla_t\TTT_{xx}-\nabla_x\TTT_{tx}
=&\,\partial_t\sigma^2-2\TTT_{xp}\Gamma^p_{xt}+\TTT_{tp}\Gamma^p_{xx}
+\TTT_{xp}\Gamma^p_{xt}\\
=&\,2\sigma\sigma^\prime-\sigma^2\Gamma^x_{xt}+\frac{\rho}{(\sigma-\rho)^2}\Gamma^t_{xx}\\
=&\,2\sigma\sigma^\prime-\sigma\sigma^\prime/2-\sigma^\prime\rho/2\\
=&\,\bigl(3\sigma-\rho\bigr)\sigma^\prime/2
\end{align*}
\begin{align*}
\nabla_x\TTT_{tt}-\nabla_t\TTT_{xt}
=&\,\partial_x\biggl(\frac{\rho}{(\sigma-\rho)^2}\biggr)
-2\TTT_{tp}\Gamma^p_{tx}+\TTT_{xp}\Gamma^p_{tt}
+\TTT_{tp}\Gamma^p_{xt}\\
=&\,\frac{\partial_x \rho}{(\sigma-\rho)^2}
+\frac{2\rho\,\partial_x\rho}{(\sigma-\rho)^3}
-\frac{\rho}{(\sigma-\rho)^2}\Gamma^t_{tx}+\sigma^2\Gamma^x_{tt}\\
=&\,\frac{\partial_x \rho}{(\sigma-\rho)^2}
+\frac{2\rho\,\partial_x\rho}{(\sigma-\rho)^3}
-\frac{\rho\,\partial_x\rho}{(\sigma-\rho)^3}
-\frac{\sigma\,\partial_x\rho}{(\sigma-\rho)^{3}}\\
=&\,0
\end{align*}
\begin{align*}
\nabla_t\TTT_{xy}-\nabla_x\TTT_{ty}
=&\,-\TTT_{xp}\Gamma^p_{ty}-\TTT_{yp}\Gamma^p_{tx}
+\TTT_{yp}\Gamma^p_{tx}+\TTT_{tp}\Gamma^p_{xy}\\
=&\,-\sigma^2\Gamma^x_{ty}+\frac{\rho}{(\sigma-\rho)^2}\Gamma^t_{xy}\\
=&\,0
\end{align*}
\begin{align*}
\nabla_x\TTT_{yt}-\nabla_y\TTT_{xt}
=&\,-\TTT_{yp}\Gamma^p_{tx}-\TTT_{tp}\Gamma^p_{xy}
+\TTT_{xp}\Gamma^p_{ty}+\TTT_{tp}\Gamma^p_{xy}\\
=&\,-\sigma^2\Gamma^y_{tx}+\sigma^2\Gamma^x_{ty}\\
=&\,0\,.
\end{align*}
Hence, by our choices for the functions $\sigma$ and $\rho$, the 
tensor $\TTT$ is a Codazzi tensor.

It is easy to see that in the zone where $\sigma$ is nonconstant, the
manifold is a warped product on a interval of $\RR$, instead, in the zone
$(-1,1)\times\SS^1\times\SS^1$, if the function $\rho$ is suitably
chosen nonconstant on the leaves  $\{t\}\times\SS^1\times\SS^1$, it can be
checked that $(M,g)$ is not a warped product on an interval (actually,
in this example, it is incidentally a warped product on  $\SS^1\times\SS^1$), see
the careful analysis in~\cite{merton}.\\
Hence, the two situations described in Theorem~\ref{mainteo} are both
present in this example.

\section{Three--Dimensional Gradient Ricci Solitons}

Let $(M^{3},g)$ be a three--dimensional gradient Ricci soliton, that is a Riemannian manifold satisfying the equation
\begin{equation}\label{sol}
\Ric + \nabla^{2} f \,=\, \lambda \,g
\end{equation}
for some smooth function $f:M^3\to\RR$ and some constant $\lambda\in\RR$.

\begin{lemma} On every three--dimensional gradient Ricci soliton the tensor
$$
\TTT \,=\, \big( \Ric - \tfrac{1}{2}\RRR \, g \big) e^{-f}
$$
is a Codazzi tensor.
\end{lemma}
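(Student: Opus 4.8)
The plan is to show directly that the Codazzi defect of $\TTT$ vanishes. Writing $E = \Ric - \tfrac{1}{2}\RRR\,g$ for the Einstein tensor, so that $\TTT = e^{-f}E$, I would first compute, in any local coordinates,
$$
\nabla_i\TTT_{jk}-\nabla_j\TTT_{ik} \,=\, e^{-f}\Big[\big(\nabla_iE_{jk}-\nabla_jE_{ik}\big)-\big(E_{jk}\nabla_if-E_{ik}\nabla_jf\big)\Big].
$$
Since $e^{-f}>0$, the tensor $\TTT$ is Codazzi if and only if the bracket vanishes, that is, if and only if
$$
\nabla_iE_{jk}-\nabla_jE_{ik} \,=\, E_{jk}\nabla_if-E_{ik}\nabla_jf .
$$
This reduces the statement to a pointwise identity involving only $\Ric$, $\RRR$, $g$ and $\nabla f$.

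Next I would expand the left-hand side as $(\nabla_i\Ric_{jk}-\nabla_j\Ric_{ik})-\tfrac12\big((\nabla_i\RRR)g_{jk}-(\nabla_j\RRR)g_{ik}\big)$ and use the soliton equation~\eqref{sol} to turn the antisymmetrized derivative of the Ricci tensor into a curvature term. Indeed, $\Ric_{jk}=\lambda g_{jk}-\nabla_j\nabla_k f$ yields $\nabla_i\Ric_{jk}-\nabla_j\Ric_{ik}=-[\nabla_i,\nabla_j]\nabla_k f$, and the Ricci commutation identity for the $1$--form $df$ expresses this as a contraction of the Riemann tensor $\Rm$ with $\nabla f$. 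In parallel I would record the two standard soliton identities obtained from~\eqref{sol} by tracing and by the contracted second Bianchi identity, namely $\RRR+\Delta f=n\lambda$ and $\nabla_i\RRR=2\,\Ric_{il}\nabla^l f$; the latter is precisely what is needed to rewrite the scalar-curvature terms above as contractions of $\Ric$ with $\nabla f$.

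The decisive step is then to invoke the special feature of dimension three: the Weyl tensor vanishes, so $\Rm$ is the Kulkarni--Nomizu product of the Schouten tensor with $g$ and can be written purely in terms of $\Ric$, $\RRR$ and $g$, as $\Rm_{ijkl}=\Ric_{ik}g_{jl}-\Ric_{il}g_{jk}+\Ric_{jl}g_{ik}-\Ric_{jk}g_{il}-\tfrac12\RRR(g_{ik}g_{jl}-g_{il}g_{jk})$. Substituting this into the curvature contraction and using $\nabla_i\RRR=2\Ric_{il}\nabla^l f$, all the terms of the form $(\Ric_{il}\nabla^l f)\,g_{jk}$ and $(\Ric_{jl}\nabla^l f)\,g_{ik}$ cancel, and the surviving terms collapse exactly to $E_{jk}\nabla_if-E_{ik}\nabla_jf$, which is the required identity.

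I expect the main obstacle to be the sign and index bookkeeping in this final cancellation, together with making transparent that it relies on $n=3$: it is precisely the numerical coefficients coming from the three-dimensional decomposition of $\Rm$ that force the unwanted $g_{jk}$-- and $g_{ik}$--terms to disappear. In higher dimension the Weyl part of $\Rm$ would survive the contraction with $\nabla f$, and $\TTT$ would fail to be a Codazzi tensor in general, so the argument is genuinely three-dimensional.
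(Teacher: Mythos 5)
Your proposal is correct and follows essentially the same route as the paper's proof: reduce to an identity for the Einstein tensor against $\nabla f$, invoke the two standard soliton identities $\nabla_i\RRR = 2\Ric_{il}\nabla^l f$ and $\nabla_i\Ric_{jk}-\nabla_j\Ric_{ik}=-\Rm_{ijkl}\nabla^l f$ (which the paper cites rather than rederives from the commutator as you do), and then substitute the three--dimensional decomposition of $\Rm$ to obtain the cancellation. The only work left is the index bookkeeping you already anticipate, and it goes through exactly as you describe.
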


\begin{proof} Let $(M^{3},g)$ be a three dimensional gradient Ricci soliton satisfying equation~\eqref{sol} and let 
$$
\TTT_{ij} \,=\, \big( \RRR_{ij}- \tfrac{1}{2}\RRR\,g_{ij}\big)\,e^{-f} \,.
$$
We want to prove that $\TTT$ is a Codazzi tensor, i.e. we have to show that
$$
\nabla_{k}\TTT_{ij} \,=\, \nabla_{j} \TTT_{ik} \,,
$$
for every $i,j,k=1,2,3$. One has
\begin{eqnarray}\label{ids1}
\nabla_{k}\TTT_{ij} - \nabla_{j} \TTT_{ik} &=& \big[\nabla_{k}\RRR_{ij}-\nabla_{j}\RRR_{ik} -\tfrac{1}{2}(\nabla_{k}\RRR\,g_{ij}-\nabla_{j}\RRR\,g_{ik})\big]\,e^{-f}\nonumber\\
&& + \,\big[\tfrac{1}{2}\RRR(\nabla_{k}f\,g_{ij}-\nabla_{j}f\,g_{ik})-\nabla_{k}f\,\RRR_{ij}+\nabla_{j}f\,\RRR_{ik} \big]\,e^{-f}\,.
\end{eqnarray}
On the other hand, the following two identities hold on any gradient Ricci soliton (for a proof, see~\cite{mantemin2}, for instance)
\begin{eqnarray}\label{eqs1}
& \nabla_{k} \RRR \,=\, 2 \nabla_{p}f\, \RRR_{pk}& \\ \label{eqs2}
& \nabla_{k}\RRR_{ij} - \nabla_{j}\RRR_{ik} \,=\, -\RRR_{kjip}\nabla_{p}f \,. &
\end{eqnarray}
Moreover, since we are in dimension three, one has the decomposition of the Riemann tensor
$$
\RRR_{kjip} \,=\, \RRR_{ik} g_{jp}-\RRR_{kp}g_{ij}+\RRR_{jp}g_{ik}-\RRR_{ij}g_{kp}- \tfrac{1}{2}\RRR(g_{ik}g_{jp}-g_{ij}g_{kp}) \,.
$$
Combining with equation~\eqref{eqs2}, we obtain
$$
\nabla_{k}\RRR_{ij} - \nabla_{j}\RRR_{ik} \,=\, -\nabla_{j}f\,\RRR_{ik} +\nabla_{p}f\,\RRR_{kp}g_{ij}-\nabla_{p}f\,\RRR_{jp}g_{ik}+\nabla_{k}f\, \RRR_{ij}-\tfrac{1}{2}\RRR(\nabla_{k}f\,g_{ij}-\nabla_{j}f\,g_{ik}) \,. 
$$
Hence, substituting this in equation~\eqref{ids1} and using relation~\eqref{eqs1}, we immediately get
\begin{eqnarray*}
\nabla_{k}\TTT_{ij} - \nabla_{j} \TTT_{ik} \,=\,0\,.
\end{eqnarray*}
\end{proof}

As an application of this lemma and the results of the previous
sections, we have the following theorem.

\begin{teo}\label{teosol} Let $(M^3,g)$ be a complete,
  three--dimensional, simply connected Riemannian manifold, which is a steady, gradient Ricci soliton and 
  assume that there exists an open subset $U\subset
  M$, where the Ricci tensor of $g$ has at most two distinct
  eigenvalues. Then, either the manifold splits a line or it is
  locally conformally flat.
\end{teo}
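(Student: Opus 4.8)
The plan is to feed the Codazzi tensor $\TTT=(\Ric-\tfrac12\RRR\,g)e^{-f}$ produced by the previous Lemma into Theorem~\ref{mainteo}, and then to promote the resulting local structure to a global statement using the real analyticity of the soliton. First I would record that, since $e^{-f}>0$ and passing from $\Ric$ to the Einstein tensor $\Ric-\tfrac12\RRR g$ subtracts the same scalar $\tfrac12\RRR$ from every eigenvalue, the tensor $\TTT$ has exactly the same eigenvalue--coincidence pattern as $\Ric$; in particular $\TTT$ has at most two distinct eigenvalues on $U$. A gradient Ricci soliton is real analytic in harmonic coordinates, so $g$ and $f$ are analytic, and the vanishing of the discriminant of the characteristic polynomial of $\Ric$ is an analytic condition holding on the open set $U$, hence on all of the connected $M$. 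Therefore $\Ric$, and with it $\TTT$, has at most two distinct eigenvalues \emph{everywhere}.

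Next I would split according to the multiplicities. If $\Ric$ is a multiple of $g$ (triple eigenvalue) on some open set, analyticity makes $M$ Einstein, and in dimension three this forces constant sectional curvature, so $M$ is locally conformally flat and we are done. Otherwise $\Ric$ has a simple eigenvalue $\mu_0$ and a double eigenvalue $\mu_1$ on an open dense set $M'$, where $\TTT$ meets the hypotheses of Theorem~\ref{mainteo} with multiplicities $1$ and $2$. I would then invoke the analytic dichotomy of Remark~\ref{remanal}: either $M$ is locally a warped product $I\times_w N^2$ over an interval, or it is globally foliated by the totally geodesic leaves of the double--eigenvalue distribution $V_\sigma$, along which the eigenvalue $\sigma$ of $\TTT$ is constant.

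In the warped--product case I would argue LCF as follows. A computation of the fiber eigenvalue of $\TTT$ gives $\sigma=(w''/w)e^{-f}$; since $\sigma$ is constant on the fibers (Besse's Proposition~16.11) while $w''/w$ depends only on the interval variable, the potential $f$ is constant along the fibers. Substituting $f=f(s)$ into the fiber components of the soliton equation $\Ric+\nabla^2 f=\lambda g$ forces the Gauss curvature of $N^2$ to equal a function of $s$ alone, hence to be constant; thus $N^2$ is a surface of constant curvature and $I\times_w N^2$ is locally conformally flat, so by analyticity $M$ is locally conformally flat. In the foliation case the leaves being totally geodesic already yields $\nabla_X\nu=0$ for every $X$ tangent to a leaf, where $\nu$ is the unit field spanning the line distribution $V_\rho$; the Codazzi equation for $\TTT$ supplies $\nabla_\nu\nu=(\rho-\sigma)^{-1}\nabla^{\mathrm{leaf}}\rho$, so it remains to prove that the right--hand side vanishes. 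Once $\nabla\nu\equiv 0$, the field $\nu$ is parallel; a local parallel field on a simply connected analytic manifold extends globally, and de~Rham's theorem together with completeness and simple connectedness gives $M=\RR\times N^2$, i.e. the manifold splits a line.

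The hard part will be exactly this last vanishing, $\nabla_\nu\nu=0$, in the non--warped case. The delicate point is that the Codazzi identity for $\TTT$, the soliton identities $\nabla\RRR=2\Ric(\nabla f)$ and $\nabla_k\Ric_{ij}-\nabla_j\Ric_{ik}=-\Rm_{kjip}\nabla_p f$, and the three--dimensional decomposition of $\Rm$ are all mutually consistent and, when combined to first order, merely reproduce the relation $\nabla_\nu\nu=(\rho-\sigma)^{-1}\nabla^{\mathrm{leaf}}\rho$ without pinning it to zero. Breaking this degeneracy requires genuinely new input: I expect to combine the constancy of $\sigma$ with the second Bianchi identity and the completeness together with the shrinking/steady hypothesis to exclude a non-parallel configuration, thereby forcing $\nabla_\nu\nu=0$. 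This is the step I anticipate to be the main obstacle.
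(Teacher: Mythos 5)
Your skeleton --- feed $\TTT=(\Ric-\tfrac12\RRR\,g)e^{-f}$ into Theorem~\ref{mainteo}, use analyticity to propagate the eigenvalue degeneracy, and then run the dichotomy of Remark~\ref{remanal} --- is the same as the paper's, and your warped--product branch is essentially workable (the paper gets $f$ constant on the leaves differently, from the constancy of $\trace(\TTT)=-\tfrac12\RRR e^{-f}$ together with $\nabla\RRR=2\Ric(\nabla f,\cdot)$ and the strict positivity of $\Ric_0^0$, which also avoids the degenerate case $w''=0$ where your formula $\sigma=(w''/w)e^{-f}$ gives no information about $f$). But the step you flag as the main obstacle is a genuine gap, and the route you propose for it cannot be closed. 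Total geodesy of the $V_\sigma$--leaves constrains only the second fundamental form of the leaves; it says nothing about $\nabla_\nu\nu$, which by Proposition 16.11 of Besse is governed by the leaf--tangential derivative of $\rho$, and the example of Section~\ref{example} (on $(-1,1)\times\SS^1\times\SS^1$, where $\rho$ is nonconstant on the leaves) shows that a Codazzi tensor can perfectly well produce a totally geodesic foliation whose orthogonal line field is \emph{not} geodesic. So no combination of the Codazzi identity, the soliton identities and the Bianchi identity will force $\nabla_\nu\nu=0$; the foliation case has to be \emph{excluded}, not converted into a splitting.

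The missing input is the curvature sign. By the results of Zhang and Chen, a complete shrinking or steady three--dimensional gradient soliton generates an ancient Ricci flow and hence has nonnegative sectional curvature. This does two things in the paper's proof, neither of which appears in your proposal. First, the ``splits a line'' alternative of the theorem does not come from the foliation branch at all: it comes from the case where some sectional curvature vanishes, via the strong maximum principle for ancient solutions. Second, once all sectional curvatures are strictly positive, the foliation branch is killed outright by the second Rauch comparison theorem: a positively curved manifold admits no equidistant family of totally geodesic hypersurfaces of dimension greater than one. Without importing the nonnegativity of the curvature, your case division (warped product $\Rightarrow$ LCF, foliation $\Rightarrow$ splits a line) does not match the actual logic of the statement, and the foliation case remains open exactly where you anticipated.
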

\begin{proof} 
By~\cite{zhang2}, as $(M^3,g)$ is complete, this 
gradient Ricci soliton generates an ancient Ricci flow. Then, by the
result~\cite[Corollary~2.4]{chen2} the evolving manifold, hence the Ricci
soliton, must have nonnegative sectional curvatures. Moreover, it is
well known, by the properties of the parabolic equations, that the
metric $g$ must be analytic (see~\cite[Chapter~3, Section~2]{chowbookII}).

If at least one sectional curvature is zero at some point, then the manifold $(M^3,g)$ "splits a line" (see~\cite{chowbookII}), that is, it is isometric to the Riemannian product of $\RR$ with a surface. Hence, we will assume in the rest
  of the proof that all the sectional  curvatures are strictly
  positive everywhere.
  
The analyticity of the metric implies that either at every point of the open subset $U$ all of the three eigenvalues of the Ricci tensor coincide, or there is another, possibly smaller, open subset $W$ of $M^3$ such that the Ricci tensor has everywhere in $W$ exactly two distinct eigenvalues. 

The first case cannot occur, since$(U,g)$ would be locally isometric to an Einstein manifold with positive curvature, then, by
analyticity, $(M^{3},g)$ must be isometric to the sphere
$\SS^3$, and this would contradict the fact that every compact steady Ricci soliton is Ricci flat. Thus, we assume from now on that there exists an open subset $W$ where the Ricci tensor has exactly two distinct eigenvalues. This implies that on $W$ the Codazzi tensor $\TTT=(\Ric-\RRR g/2)e^{-f}$ has
two distinct eigenvalues $\sigma$, with multiplicity $2$, and $\rho$, with multiplicity $1$.

As Remark~\ref{remanal} applies to this case, by
Theorem~\ref{mainteo}, we have that two possible subcases:
either around every point of $W$ the manifold is locally isometric to a warped product of a surface on an interval, or the eigenvalue $\sigma$ of the Codazzi tensor $\TTT$ is constant on $W$,
hence on the whole $M^3$ by analyticity. 
On the other hand, since the curvature of $(M^{3},g)$ is strictly
positive, $(M^{3},g)$ cannot admit equidistant totally geodesic
submanifolds of dimension greater than one by the second Rauch
comparison theorem (see~\cite{groklinmey}) 
and this latter subcase is excluded.

In the former subcase, the leaves of the distribution $V_\sigma$ are
umbilical and the two eigenvalues of $\TTT$ are constant on every
leaf. Let $L$ be a connected component of a leaf of $V_{\sigma}$. By Gauss formula, one
has that the scalar curvature of the induced metric $g^{\sigma}$ is
given by
\begin{equation}
\RRR^{\sigma} \,=\, \RRR - 2\RRR_0^0+\HHH^{2}/2\,,\label{rsigmacon}
\end{equation} 
where $\HHH$ denotes the mean curvature of $L$. Since $g$ has positive
sectional curvature, one has $\RRR g-2\,\Ric>0$ and we obtain that
$\RRR^{\sigma}$ is positive. We want to prove that $g$ is locally a
warped product on an interval of $\RR$ of two--dimensional fibers with 
constant positive curvature. Hence, we have to show that $\RRR^{\sigma}$ is
constant on $L$. 

First of all, we observe that by the same reasoning as
in the proof of Theorem~\ref{mainteo}, we know that $\HHH$ is constant on
$L$. Thus, it remains to show that also the quantity $\RRR-2\RRR_0^0$ is
constant on $L$. The fact that all the eigenvalues of the tensor $\TTT$ are constant on $L$,
implies that the trace of $\TTT$
$$
\trace(\TTT) \,=\, -\RRR e^{-f}/2
$$ 
is constant on $L$. We claim that also $f$ has to be constant on
$L$. 

Using the adapted coordinate system as in the proof of
Theorem~\ref{mainteo}, we assume by contradiction that $\partial_{j}
f\neq 0$, for some $j\in\{1,2\}$ at some point of $L$. 
As $\partial_{j}\trace(\TTT)=0$ it follows 
that $\nabla f$ and $\nabla\RRR$ are parallel and
$$
\RRR\nabla f\,=\, \nabla\RRR \,=\, 2\Ric(\nabla f , \cdot)\,, 
$$
hence, $\nabla f$ is an eigenvalue of the Ricci tensor. Being $\partial_j
f\not=0$, then it must be
$$
\RRR\nabla f\,=\, 2\Ric_j^j\nabla f\,, 
$$
which is a contradiction, as $\RRR=2\Ric_j^j+\Ric_0^0$ and $\Ric_0^0$
is positive by assumption (notice that we have used the fact that the
Ricci tensor has exactly two distinct eigenvalues with the same
eigendistributions as the Codazzi tensor $\TTT$). Thus, we have proved
that $f$ is constant on $L$ which implies that $\RRR$ is constant on $L$ too. Then, it follows, by the definition of $\TTT$ and the fact that its
eigenvalues are constant on $L$, that also the 
eigenvalues of the Ricci tensor are constant on $L$. In particular, $\RRR_0^0$ is constant on $L$. 

By relation~\eqref{rsigmacon}, we conclude that $L$ has positive constant scalar curvature $\RRR^{\sigma}$. Hence, the leaf $L$ is locally isometric to $\SS^{2}$ and the metric $g$ in $W$ is locally a warped product of an interval with two--dimensional spherical fibers. In particular it is locally conformally flat.

Using once again the analyticity, we can conclude that 
since $(W, g|_W)$ is a locally conformally flat open subset of $(M^3,g)$, then the whole $(M^3,g)$ must be locally conformally flat. This completes the proof.
\end{proof}

\begin{rem} By the same argument, 
the conclusion of this theorem also holds for 
complete, three--dimensional, simply connected, {\em expanding}, 
gradient Ricci solitons with nonnegative sectional curvatures.
\end{rem}

\begin{rem} Three--dimensional, locally conformally flat, gradient steady Ricci solitons were classified by Cao--Chen~\cite{caochen}. In particular,
  under the assumptions of Theorem~\ref{teosol} we have that
  $(M^{3},g)$ is isometric to $\RR^{3}$, the {\em Bryant} soliton or the Riemannian product of $\RR$ with {\em Hamilton's cigar}. Note that if a three--dimensional, gradient steady Ricci
  soliton splits a line, then it must be the Riemannian product of 
  $\RR$ with a two--dimensional complete simply connected gradient steady soliton, that is, $\RR^2$ or Hamilton's cigar.
\end {rem}

\

\bibliographystyle{amsplain}
\bibliography{biblio}

\

\end{document}